\documentclass{amsart}
\usepackage{amsmath}
\usepackage{amsfonts}
\usepackage{amstext}
\usepackage{amsbsy}
\usepackage{amsopn}
\usepackage{amsxtra}
\usepackage{upref}
\usepackage{amsthm}
\usepackage{amsmath}
\usepackage{amssymb}

\parindent=0cm
\parskip=3mm

\newtheorem{prop}{Proposition}[section]
\newtheorem{rem}{Remark}[section]
\newtheorem{lema}{Lemma}[section]
\newtheorem{defi}{Definition}[section]
\newtheorem{teo}{Theorem}[section]

\newtheorem*{claim*}{Claim}

\newtheorem{maintheorem}{Theorem}

\def\phi{\varphi}

\def\N{{\mathbb N}}

\DeclareMathOperator{\card}{card}

\title{Hausdorff Dimension of Cantor Series}
\date{\today}

\begin{thanks} { G. I. would like to acknowledge the support of
    Proyecto Fondecyt 11070050.  B. S. would like to acknowledge the
    support of Proyecto Fondecyt 11060538.  The authors would also like to
    thank Mike Todd for useful conversations on this subject.}
\end{thanks}

\author{Godofredo Iommi} \address{Facultad de Matem\'aticas,
Pontificia Universidad Cat\'olica de Chile (PUC), Avenida Vicu\~na Mackenna 4860, Santiago, Chile}
\email{giommi@mat.puc.cl}
\urladdr{http://www.mat.puc.cl/\textasciitilde giommi/}
\author{Bart{\l}omiej Skorulski} \address{Universidad cat\'olica del Norte, Facultad de Ciencias, Departamento de Matem\'aticas, Avenida Angamos 0610, Antofagasta, Chile}
\email{bskorulski@ucn.cl}

\subjclass[2010]{Primary 37A45, 11K55, 37H99
  Secondary 11K16} 
\keywords{Cantor Series, Hausdorff dimension, frequency of digits}

\begin{document}

\begin{abstract}
  In 1996 Y. Kifer obtained a variational formula for the Hausdorff
  dimension of the set of points for which the frequencies of the
  digits in the Cantor series expansion is given. In this note we
  present a slightly different approach to this problem that allow us
  to solve the variational problem of Kifer's formula.
\end{abstract}

\maketitle
\section{Introduction}
Let $A=\{b_n\}_{n \in \N}$ be a sequence of positive integers such that $b_n
\in \N \setminus \{1\}$. Every real number $x \in [0,1]$ can be
written as
\begin{equation*}
  x = \sum_{n=1}^{\infty} \frac{\epsilon_n(x)}{b_1 b_2\cdots b_n},
\end{equation*}
where $\epsilon_n(x) \in \{0, 1 , \dots , b_n -1 \}$. We write
\begin{equation*}
  x= \left[\epsilon_1(x) \epsilon_2(x) \dots \epsilon_n(x) \dots \right]_A
\end{equation*}
and call it the $A-$\emph{Cantor Series} of $x$ with respect to the
base $A=\{b_n \}_{n\in \N}$. For every base $A$ the Cantor series is unique
except for a countable number of points. Note that if $A$ is the sequence such that for every $n \in \N$ we have $b_n= b$, then the $A-$Cantor series corresponds to the base $b$ expansion of $x \in [0,1]$.

In 1996 Kifer (see \cite{kif}) studied the problem of  computing the size of the level sets
determined by the frequency of digits in the $A$-Cantor series
expansion. More precisely, for each $n \in \mathbb{N}$, $k \in
\mathbb{N}$ and $\textrm{ } x \in (0,1)$ set
\begin{equation*}
\tau_{j}(x,n):= \card\{ 1\leq k \leq n : \epsilon_{k}(x)=j\}.
\end{equation*}
Whenever there exists the limit
\begin{equation} \label{deffre}
  \tau_{j}(x)= \lim_{n \rightarrow \infty } \frac{\tau_{j}(x,n)}{n},
\end{equation}
it is called the \emph{frequency} of the number $j$ in the $A-$Cantor
series expansion of $x$. A sequence $\alpha= \{\alpha_n \}_{n \in \N}$ is called a
\emph{stochastic vector}, if $\sum_{j=0}^{\infty}
\alpha_j = 1$ and for every $j \in \N$ we have $\alpha_j \geq 0$. We consider  the set
of points for which the frequency of the digit $j$ is equal to
$\alpha_j$, that is
\begin{equation}\label{eq:5}
J_{A}(\alpha)= \left\{x \in  [0,1] : \tau_j(x)= \alpha_j \text { for every }
j \in \{0, 1 , \dots\}       \right\}.
\end{equation} 
The question we are interested in is: \emph{What is the size of these sets?}

In the case that the sequence $A$ is constant, with $b_n=b$, this
problem was studied in 1949 by Eggleston (see \cite{eg}). In this setting the
only digits that appear (hence, that can have positive frequency) are
$\{0,1 , \dots, b-1\}$. Note that, by Borel Normal Number Theorem, the
Lebesgue measure of $J(\alpha)$ is positive if, and only if,
$\alpha_j=1/b$ for all $j=0,\ldots, b-1$. Therefore, in order to
quantify the size of $J(\alpha)$  Eggleston considered its Hausdorff
dimension, that we denote by $\dim_H(\cdot)$. In \cite{eg} he proved that
\begin{equation*}
  \dim_H(J(\alpha)) = \frac{\sum_{j=0}^{b-1} \alpha_j \log \alpha_j}{\log b}.
\end{equation*}

The general problem, when the sequence $A$ is non-constant, is more
subtle. Indeed, the frequency of the digit $k$ depends also on the
frequency of the base $b_n$ in the sequence $A$. For example, let  
$A=(2,2,3,2,2,3,2,2,3, \dots)$. That is, the frequency of the
base $3$ in the sequence $A$ is equal to $1/3$ and of the base $2$ is
equal to $2/3$. Since the digit $k=2$ can only appear when $b_n=3$ we
have that for every $x \in [0,1]$ the following bound holds $\tau_2(x)
\leq 1/3$.

The first studies of Hausdorff dimension of sets of numbers
defined in terms of their frequencies of digits in a Cantor expansion
were done by Preyri\`ere \cite{pre}. He considered sets for
which the conditional frequencies of the digit $i$ subject to the
frequency of the base $b_n$ are fixed and computed its Hausdorff
dimension. Let us be more precise. Define the frequency of the base
symbol $k$ in the sequence $A$ by
\begin{equation} \label{da} 
  d_{k}= \lim_{n \to \infty} (1/n)D_k(n),
\end{equation} 
where
\begin{equation} \label{D-k}
  D_k(n) =\card\{i \in \{1,2, \dots, n \} : b_i=k \}.
\end{equation}
Let
\begin{equation} \label{P}
  \pi=\Big\{ P=(p_{n,j})_{
      \begin{subarray}{l}
        j=0,\ldots,n-1\\
        n=2,\ldots
      \end{subarray}} : \sum_{j=0}^{b_n-1} p_{n,j} =1\textrm{, } 
  p_{n,j}\geq 0\Big\}.
\end{equation}
Denote by 
\begin{equation} \label{tau-k-m}
  \tau_{k,j}(x,n):=\card\{i \in  \{1,2, \dots, n\}:
  b_i=k \text{ and } \epsilon_i(x)=j     \},
\end{equation}
and define the frequency by
\begin{equation}
\tau_{k,m}(x):= \lim_{n \to \infty}  \tau_{k,m}(n,x)
\end{equation}
whenever the limit exits. For $P\in\pi$ consider the set
\begin{equation} \label{set}
  J_{A}(P)= \{x \in [0,1] :    
  \tau_{k,b_n}(x)=d_{b_n} p_{k,b_n}  , n \in \N  \}.
\end{equation}
In 1977  Peyri\`ere, see \cite{pre}, proved that:
\begin{teo}[Peyri\`ere]
  Let $A=\{b_n\}_{n \in \N}$ be a sequence of positive integers $b_n
  \in \N \setminus \{1\}$ such that $\sum_{n=2}^{\infty} d_n =1$.  Let
  $P\in \pi$, then
\begin{eqnarray}
  \label{eq:3}
  \dim_H(J_{A}(P))  = 
  \frac{\sum_{n=2}^{\infty} d_{n} \sum_{j=0}^{n-1} p_{n,j} \log p_{n,j}} 
  {\sum_{n=1}^{\infty} d_{n} \log n}.
\end{eqnarray}
\end{teo}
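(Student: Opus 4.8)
The plan is to realise the dimension formula \eqref{eq:3} as the local dimension of a natural measure carried by $J_A(P)$, and then to read off $\dim_H J_A(P)$ from Billingsley's lemma, i.e. from both directions of the mass distribution principle. Write $I_n(x):=[\epsilon_1(x)\cdots\epsilon_n(x)]_A$ for the generation-$n$ cylinder of $x$ and recall that all generation-$n$ cylinders share the length $|I_n(x)|=(b_1\cdots b_n)^{-1}$. Let $\mu$ be the Borel probability measure on $[0,1]$ under which the digits $\epsilon_1(x),\epsilon_2(x),\dots$ are independent with $\epsilon_i$ equal to $j\in\{0,\dots,b_i-1\}$ with probability $p_{b_i,j}$; equivalently $\mu(I_n(x))=\prod_{i=1}^n p_{b_i,\epsilon_i(x)}$, a consistent family, so $\mu$ exists by Kolmogorov's extension theorem. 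The first step is to check $\mu(J_A(P))=1$: for fixed $k$ and $j<k$ the variables $Y_i(x):=\mathbf 1\{b_i=k,\ \epsilon_i(x)=j\}$ are independent and bounded, with $\frac1n\sum_{i=1}^n\mathbb E Y_i=\frac{D_k(n)}{n}\,p_{k,j}\to d_k p_{k,j}$, so Kolmogorov's strong law gives $\tau_{k,j}(x,n)/n\to d_k p_{k,j}$ for $\mu$-a.e.\ $x$, and intersecting over the countably many pairs $(k,j)$ yields $\mu(J_A(P))=1$.

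Second, I would identify the candidate value. For \emph{every} $x\in J_A(P)$ — only the prescribed frequencies enter — one has
\[
\frac1n\log\mu(I_n(x))=\frac1n\sum_{k}\sum_{j=0}^{k-1}\tau_{k,j}(x,n)\log p_{k,j}\ \longrightarrow\ \sum_k d_k\sum_{j=0}^{k-1}p_{k,j}\log p_{k,j}=:-h ,
\]
\[
\frac1n\log|I_n(x)|=-\frac1n\sum_{i=1}^n\log b_i=-\frac1n\sum_k D_k(n)\log k\ \longrightarrow\ -\sum_k d_k\log k=:-\Lambda ,
\]
so that $\log\mu(I_n(x))/\log|I_n(x)|\to h/\Lambda$, which is the dimension appearing in \eqref{eq:3}. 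Passing the limits inside the (possibly infinite) sums over bases, and the existence of the second limit itself, use the standing regularity on $A$, concretely $\tfrac1n\sum_{i=1}^n\log b_i\to\sum_k d_k\log k<\infty$; if instead $\Lambda=+\infty$ a direct covering argument gives $\dim_H J_A(P)=0$, consistent with \eqref{eq:3}, so from now on $\Lambda<\infty$.

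Third, pass from cylinders to balls. If $|I_{n+1}(x)|\le r<|I_n(x)|$ then $I_{n+1}(x)\subseteq B(x,r)$, so $\mu(B(x,r))\ge\mu(I_{n+1}(x))$ and hence $\limsup_{r\to0}\log\mu(B(x,r))/\log r\le h/\Lambda$ for every $x\in J_A(P)$; Billingsley's upper bound then gives $\dim_H J_A(P)\le h/\Lambda$. For the reverse inequality, since all generation-$n$ cylinders have equal length, $B(x,r)$ with $r<|I_n(x)|$ meets only $I_n(x)$ and its two immediate neighbours $I_n^{\pm}(x)$, so $\mu(B(x,r))\le\mu(I_n(x))+\mu(I_n^-(x))+\mu(I_n^+(x))$; it then suffices to show that for $\mu$-a.e.\ $x$ one still has $\frac1n\log\mu(I_n^{\pm}(x))\to-h$, after which the mass distribution principle applied to $\mu$ on the full-measure set $\{\underline d_\mu=h/\Lambda\}\cap J_A(P)$ yields $\dim_H J_A(P)\ge h/\Lambda$.

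The main obstacle is exactly this neighbour estimate. Letting $j=j(n,x)$ be the largest index $\le n$ with $\epsilon_j(x)<b_j-1$, a computation with the carry-over in the Cantor expansion gives
\[
\frac{\mu(I_n^{+}(x))}{\mu(I_n(x))}=\frac{p_{b_j,\,\epsilon_j(x)+1}}{p_{b_j,\,\epsilon_j(x)}}\prod_{i=j+1}^{n}\frac{p_{b_i,0}}{p_{b_i,\,b_i-1}}
\]
(and symmetrically for $I_n^-$), so the danger is a long terminal block $\epsilon_{j+1}(x)=b_{j+1}-1,\dots,\epsilon_n(x)=b_n-1$ inflating this ratio exponentially. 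I would control it with a Borel--Cantelli argument: the $\mu$-probability that the terminal block at stage $n$ has length $\ell$ is $\prod_{i=n-\ell+1}^{n}p_{b_i,\,b_i-1}$, which under mild non-degeneracy of $(A,P)$ (e.g. $\sup_i p_{b_i,\,b_i-1}<1$, automatic when the bases are bounded and the $p_{b_i,j}$ are bounded away from $0$) forces these blocks to be $o(n)$ for $\mu$-a.e.\ $x$; on that full-measure set $\mu(I_n^{\pm}(x))\le\mu(I_{j-1}(x))$ with $j/n\to1$, whence $\frac1n\log\mu(I_n^{\pm}(x))\to-h$ by the second step. This Borel--Cantelli estimate, together with the interchange-of-limits points in the second step, is where regularity of $A$ and $P$ beyond $\sum_n d_n=1$ is really needed; granting it, the rest is routine.
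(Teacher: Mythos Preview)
The paper does not contain a proof of this theorem. Peyri\`ere's result is quoted from \cite{pre} and then used as a black box: in the proof of Theorem~A the lower bound for $\dim_H J_A(\alpha)$ is obtained by invoking \eqref{eq:3} for the particular family $P^\alpha$. So there is nothing in the paper to compare your argument against, and your write-up is in effect supplying a proof the paper deliberately outsources.

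That said, your outline is the standard one and is correct in spirit. The measure $\mu$ with $\mu(I_n(x))=\prod_{i=1}^n p_{b_i,\epsilon_i(x)}$ is exactly the object the paper itself builds in \eqref{eq:7} for its special $P^\alpha$, and your computation of the cylinder ratio $\log\mu(I_n(x))/\log|I_n(x)|$ matches the paper's calculation \eqref{eq:6}. The upper bound via $\overline d_\mu(x)\le h/\Lambda$ on all of $J_A(P)$ and Proposition~\ref{ub} is clean. For the lower bound you correctly identify the only real difficulty: controlling $\mu(I_n^{\pm}(x))$ when passing from cylinders to balls. Your Borel--Cantelli treatment of long terminal carry blocks is the right idea, but note that the non-degeneracy hypothesis you impose (e.g.\ $\sup_i p_{b_i,b_i-1}<1$) is genuinely extra and is \emph{not} part of the statement as quoted; Peyri\`ere's original argument handles this more intrinsically via a Billingsley-type lemma adapted to the cylinder structure rather than Euclidean balls (cf.\ also \cite{ca}), which sidesteps the neighbour comparison altogether. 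Likewise the interchange of limits in the second step needs $\sum_k d_k\log k<\infty$ and a dominated-convergence type control on $\sum_k d_k\sum_j p_{k,j}\log p_{k,j}$; you flag this honestly, but be aware the theorem as stated carries no such hypothesis, so a fully general proof has to either add it or argue around it.
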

Let us stress that the set $J_{A}(\alpha)$ defined by \eqref{eq:5},
where $\alpha$ is a stochastic vector, has a much more complicated
structure than the sets $J_{A}(P)$ considered in \eqref{set}. Not only the set
$J_A(\alpha)$ contains a non-denumerable family of sets of the form
$J_{A}(P)$. Indeed, if $x \in J_{A}(P)$ then
\begin{equation}
  \tau_{j}(x) =\sum_{n=1}^{\infty} \tau_{n,j}(x)= \sum_{n=1}^{\infty} d_n p_{n,j}.
\end{equation}
But, it is possible for $\tau_j(x)$ to exists whereas $\tau_{n,j}(x)$
does not exists for any $n \in  \N$. 

Kifer obtained a formula relating the Hausdorff dimension of $J_A(\alpha)$ with the Hausdorff dimension of certain sets $J_A(P)$. Let
\begin{displaymath}
  \pi(\alpha):=\{P\in\pi:\alpha_j=\sum_{n=1}^{\infty} d_n p_{n,j}\}.
\end{displaymath}
In \cite{kif}, Kifer proved that
\begin{teo}[Kifer]
Let $A=\{b_n\}_{n \in \N}$ be a sequence of positive integers $b_n \in \N \setminus \{1\}$
such that $\sum_{n=2}^{\infty} d_n =1$. Let $\{\alpha_n\}_{n \in \N}$ be a stochastic vector, then\begin{equation}\label{eq:4}
  \dim_H(J(\alpha)) 
  =\sup_{P\in \pi(\alpha)} \dim_H(J_A(P))=
  \sup_{P\in \pi(\alpha)}\frac{\sum_{n=1}^{\infty} d_{n} \sum_{j=0}^{n-1}
        p_{n,j} \log p_{n,j}} {\sum_{n=1}^{\infty} d_{n} \log n}.
\end{equation}
 \end{teo}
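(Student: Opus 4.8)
The plan is to prove Kifer's formula \eqref{eq:4} by establishing the two inequalities separately, the lower bound being the easier one and the upper bound requiring the solution of the variational problem. For the lower bound, observe that for every $P\in\pi(\alpha)$ we have $J_A(P)\subseteq J_A(\alpha)$: indeed, if $x\in J_A(P)$ then by the displayed identity $\tau_j(x)=\sum_{n=1}^{\infty}d_n p_{n,j}=\alpha_j$, so $x\in J_A(\alpha)$. Monotonicity of Hausdorff dimension then gives $\dim_H(J_A(\alpha))\ge\dim_H(J_A(P))$ for every such $P$, hence $\dim_H(J_A(\alpha))\ge\sup_{P\in\pi(\alpha)}\dim_H(J_A(P))$, and Peyri\`ere's theorem identifies the right-hand side with the claimed supremum. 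One must be slightly careful that $\pi(\alpha)$ is nonempty and that the sets $J_A(P)$ are nonempty for the relevant $P$; this is handled by choosing $P$ so that the series defining the dimension converges, which is possible precisely when $\alpha$ is compatible with the frequencies $d_n$.

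For the upper bound $\dim_H(J_A(\alpha))\le\sup_{P\in\pi(\alpha)}\dim_H(J_A(P))$, the plan is a covering argument combined with a large-deviations / counting estimate. Fix $s>\sup_{P\in\pi(\alpha)}\dim_H(J_A(P))$; we want to show $\mathcal H^s(J_A(\alpha))=0$. For each $n$, a point $x\in J_A(\alpha)$ has its first $n$ digits constrained: the empirical joint distribution of $(b_i,\epsilon_i(x))_{i\le n}$ must have its $\epsilon$-marginal close to $\alpha$ (for $n$ large, after passing to a point where the frequencies genuinely converge). Cover $J_A(\alpha)$ by the fundamental intervals of rank $n$ determined by admissible digit strings; each such interval has length $1/(b_1\cdots b_n)$, and $\log(b_1\cdots b_n)\sim n\sum_k d_k\log k$. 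The number of admissible strings whose empirical conditional law is near a given $P$ is, by Stirling, roughly $\exp\big(n\sum_k d_k H(p_{k,\cdot})\big)$ where $H$ is the entropy; summing the $s$-th powers of the lengths over all admissible $P$ and using that $s$ exceeds the Peyri\`ere dimension for every $P\in\pi(\alpha)$ forces the sum to zero as $n\to\infty$. The delicate point is passing from the finitely many bases that appear with frequency bounded below to the infinite tail: one truncates the base alphabet at level $N$, controls the contribution of bases $>N$ using $\sum_{n>N}d_n\to 0$, and lets $N\to\infty$ after $n\to\infty$; uniform continuity of the entropy-type functional on the relevant compact set of distributions makes this legitimate.

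The genuinely new contribution, and the main obstacle, is \emph{solving} the variational problem, i.e. showing that $\sup_{P\in\pi(\alpha)}\dim_H(J_A(P))$ has a clean closed form (or at least that it is attained and characterized by a Lagrange-multiplier condition). The plan is to recognize that the functional $P\mapsto \sum_n d_n\sum_j p_{n,j}\log p_{n,j}$ is concave on the convex set $\pi(\alpha)$ (it is a nonnegative combination of the concave negative-entropy functionals, up to sign), while the denominator $\sum_n d_n\log n$ is a constant independent of $P$. Hence maximizing the Hausdorff dimension over $\pi(\alpha)$ is simply maximizing a concave functional subject to the linear constraints $\sum_{n}d_n p_{n,j}=\alpha_j$ and $\sum_j p_{n,j}=1$. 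Introducing Lagrange multipliers $\{\lambda_j\}$ for the first family and $\{\mu_n\}$ for the second, the stationarity condition reads $d_n(\log p_{n,j}+1)=\lambda_j d_n+\mu_n$, which gives $p_{n,j}=c_n e^{\lambda_j}$ for constants $c_n$ fixed by normalization; thus the optimal $P$ is of Gibbs type, $p_{n,j}=e^{\lambda_j}/\sum_{i=0}^{n-1}e^{\lambda_i}$, with the $\lambda_j$ determined by the marginal constraints. Substituting back yields the explicit value of the supremum. The obstacles to watch: justifying that the supremum is attained despite $\pi(\alpha)$ being infinite-dimensional and only weakly compact (one argues that maximizing sequences have the entropy sums bounded and extracts a weak limit, using lower semicontinuity of $-\sum p\log p$ from above), and handling the boundary case where some $\lambda_j=-\infty$, i.e. some $\alpha_j$ forces $p_{n,j}=0$ for all $n$ with $b_n\le$ something — this is exactly the phenomenon flagged in the introduction with the example $A=(2,2,3,\dots)$ and $\tau_2(x)\le 1/3$, and it must be incorporated by restricting the effective index set in the Gibbs formula.
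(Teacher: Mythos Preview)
The paper does not itself prove Kifer's theorem; it is quoted from \cite{kif} as background. What the paper \emph{does} prove is Theorem~A, and that proof incidentally re-derives Kifer's formula by a route quite different from yours, so a comparison is worthwhile.

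Your lower bound is identical to the paper's: inclusion $J_A(P)\subset J_A(\alpha)$ for $P\in\pi(\alpha)$ together with Peyri\`ere's formula. For the upper bound you propose a covering/large-deviations argument (count rank-$n$ cylinders of each empirical type, bound by $\exp(n\sum_k d_k H(p_{k,\cdot}))$, sum $s$-th powers, truncate the base alphabet). This is essentially Kifer's original line. The paper avoids it entirely: having constructed the optimizer in product form $p^\alpha_{n,j}=r_n t_j$ (Lemma~2.1), it builds the product measure $\mu(C(\epsilon_1,\dots,\epsilon_n))=\prod_k r_{b_k}t_{\epsilon_k}$ and observes that $\log\mu(C_n(x))=\sum_j \tau_j(x,n)\log t_j+\sum_i D_i(n)\log r_i$, so the pointwise dimension $d_\mu(x)$ is the \emph{same constant} for every $x\in J_A(\alpha)$ --- the first sum involves only the digit frequencies $\alpha_j$, the second only the base frequencies $d_i$, and neither depends on the finer conditional frequencies $\tau_{k,j}(x)$. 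Proposition~2.1 then gives the upper bound in one stroke: no type counting, no truncation of the alphabet.

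Your Lagrange-multiplier analysis leading to $p_{n,j}=c_n e^{\lambda_j}$ is exactly the product form $r_n t_j$ the paper constructs; the paper builds $r_n,t_j$ by an explicit recursion rather than via multipliers, but the two descriptions coincide. What your approach buys is a conceptual explanation of \emph{why} the product form is optimal (strict concavity of entropy under linear constraints); what the paper's approach buys is that once the product form is in hand, the upper bound on $\dim_H(J_A(\alpha))$ is a five-line pointwise-dimension computation, bypassing the delicate infinite-alphabet tail estimates you flag in your sketch.
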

 This formula has however a disadvantage. Even in very simple cases it
 is difficult to calculate the Hausdorff dimension of $J(\alpha)$.  We
 propose a slightly different approach that will allow us to overcome
 this difficulty. This approach is based in the following sequences:

Put $j_0:=\min\{j:\alpha_j\neq 0\}$ and $d_1:=0$.  Let
\begin{displaymath}
  A_n:=\sum_{j=j_0}^{n-1} \alpha_j-\sum_{k=j_0+1}^{n-1}d_k
\end{displaymath}
and
\begin{displaymath}
  r_n:=\frac{1}{A_n}\prod_{k=j_0+1}^{n-1}\Big(1-\frac{d_{k}}{A_{k}}\Big)
  \textrm{ and }t_j:=\frac{\alpha_j}
  {\prod_{k=j_0+1}^{j}\Big(1-\frac{d_{k}}{A_{k}}\Big)}.
\end{displaymath}
We prove the following,

\begin{maintheorem} \label{fre} Let $A=\{b_n\}_{n \in \mathbb{N}}$ be a sequence of positive integers $b_n \in \N \setminus \{1\}$ such that $\sum_{n=2}^{\infty} d_n =1$
and $\alpha=\{\alpha_i\}_{i \in \mathbb{N}}$ be a stochastic vector then 
  \begin{displaymath} \label{m}
  \dim_H(J(\alpha)) 
    =\frac{\sum_{j=j_0}^{\infty} \alpha_j \log t_j + \sum_{i=j_0+1}^{\infty}
      d_{i} \log r_i }{ \sum_{n=2}^{\infty} d_{n} \log n}.
  \end{displaymath}
\end{maintheorem}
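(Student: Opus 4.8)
The plan is to solve the variational problem in Kifer's formula \eqref{eq:4} directly, using the method of Lagrange multipliers, and then verify that the candidate maximizer coincides with the sequences $P=(p_{n,j})$ implicitly encoded by the auxiliary quantities $r_n$ and $t_j$. By Kifer's theorem we must maximize
\begin{displaymath}
  F(P)=\frac{\sum_{n=2}^{\infty} d_n \sum_{j=0}^{n-1} p_{n,j}\log p_{n,j}}{\sum_{n=2}^{\infty} d_n \log n}
\end{displaymath}
over $P\in\pi(\alpha)$. The denominator is a positive constant independent of $P$, so it suffices to maximize the numerator $N(P)=\sum_n d_n\sum_j p_{n,j}\log p_{n,j}$ subject to the row constraints $\sum_{j=0}^{n-1} p_{n,j}=1$ for each $n\ge 2$ and the column (frequency) constraints $\sum_{n=1}^{\infty} d_n p_{n,j}=\alpha_j$ for each $j\ge j_0$. (Note $p_{n,j}$ with $j\ge n$ is forced to be $0$, and digit $j_0$ is the smallest with $\alpha_{j_0}\neq 0$; the renaming $d_1:=0$ is a bookkeeping convention so that the column sum over $n$ can start at $n=1$.)

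First I would introduce Lagrange multipliers $\lambda_n$ for the row constraints and $\mu_j$ for the column constraints and differentiate. Since $N$ is a sum of terms $d_n p_{n,j}\log p_{n,j}$ (which is strictly concave in each $p_{n,j}$, as $d_n>0$), the stationarity equation $d_n(\log p_{n,j}+1)=\lambda_n+\mu_j d_n$ gives $p_{n,j}=c_n e^{\mu_j}$ for constants $c_n$ depending only on the row, with the normalisation $c_n=\big(\sum_{j=0}^{n-1} e^{\mu_j}\big)^{-1}$. The crux is then to solve the resulting nonlinear system for the $e^{\mu_j}$'s using the column constraints $\alpha_j=\sum_{n>j} d_n c_n e^{\mu_j}=e^{\mu_j}\sum_{n>j} d_n c_n$. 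Writing $S_j:=\sum_{n=j+1}^{\infty} d_n c_n$ and $e^{\mu_j}=\alpha_j/S_j$, one gets a recursion: $S_{j-1}-S_j=d_j c_j$ and $c_j^{-1}=\sum_{i=0}^{j-1} e^{\mu_i}=\sum_{i=j_0}^{j-1}\alpha_i/S_i$. The hope — and this is where the specific combinatorial identities defining $A_n$, $r_n$, $t_j$ come in — is that unwinding this recursion shows $S_{j}$ telescopes into the product $\prod_{k=j_0+1}^{j}(1-d_k/A_k)$ (up to the factor $A_j$), so that $c_j=r_j$, $e^{\mu_j}=t_j$, and the optimal $p_{n,j}=r_n t_j$. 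I would prove by induction on $j$ that $S_j=A_{j+1}\prod_{k=j_0+1}^{j}(1-d_k/A_k)$ or the equivalent identity, with the definition $A_n=\sum_{j=j_0}^{n-1}\alpha_j-\sum_{k=j_0+1}^{n-1} d_k$ being exactly what makes the induction close.

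Having identified the maximizer $p_{n,j}^\star=r_n t_j$, substituting back into $N(P)$ gives $N(P^\star)=\sum_n d_n\sum_{j} r_n t_j(\log r_n+\log t_j)$, and using the row normalisations $\sum_j r_n t_j=\sum_{j=0}^{n-1} p^\star_{n,j}=1$ and the column constraints $\sum_n d_n r_n t_j=\alpha_j$, this collapses to $\sum_j \alpha_j\log t_j+\sum_n d_n\log r_n$, which is exactly the numerator in the Main Theorem; dividing by $\sum_{n\ge 2} d_n\log n$ finishes the computation. Two points need care and constitute the real work. First, convergence and positivity: I must check the series defining $A_n$, $r_n$, $t_j$, $S_j$ converge and that $A_n>0$ (so that the $r_n,t_j$ are well-defined and positive) — this should follow from $\sum_n d_n=1$, $\sum_j\alpha_j=1$ and $\sum_{n>j} d_n=\sum_{n\ge j_0+1} d_n - \sum_{k=j_0+1}^{j} d_k$ comparisons, but it is delicate because the sums are infinite and $P^\star$ has infinitely many nonzero entries. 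Second, the Lagrange computation only produces a critical point; I must argue it is the global maximum over $\pi(\alpha)$. Concavity of $N$ on the convex set $\pi(\alpha)$ gives this, provided $P^\star$ is an interior-type critical point in the relevant sense — one should note $\pi(\alpha)$ may be non-compact and $N$ may fail to attain its supremum, in which case $P^\star$ should be read as an approximating sequence and the supremum statement of Kifer's theorem handled by a limiting argument. I expect the main obstacle to be precisely this analytic bookkeeping — establishing that the formal telescoping identity for $S_j$ is valid as an equality of convergent series and that the supremum in \eqref{eq:4} is realised (or approached) by $P^\star$ — rather than the Lagrange-multiplier calculus itself, which is routine once set up.
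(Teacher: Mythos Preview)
Your approach is correct in outline (modulo the convergence and attainment caveats you yourself flag), but it is genuinely different from the paper's. You take Kifer's variational formula \eqref{eq:4} as a black box and then solve the constrained optimisation by Lagrange multipliers, discovering that the critical point has product form $p_{n,j}=r_n t_j$ and identifying $r_n,t_j$ with the stated expressions via the $S_j$--recursion. The paper does \emph{not} invoke Kifer's theorem at all. Instead it first constructs $P^\alpha=(r_n t_j)$ by hand via an explicit recursion (Lemma~\ref{lem:general}), verifying directly that it lies in $\pi(\alpha)$; then for the upper bound it defines the product measure $\mu(C(\epsilon_1,\dots,\epsilon_n))=\prod_k r_{b_k}t_{\epsilon_k}$ and computes its pointwise dimension at \emph{every} $x\in J_A(\alpha)$ (not just in $J_A(P^\alpha)$), obtaining the claimed value and applying the Billingsley-type bound (Proposition~\ref{ub}); for the lower bound it simply applies Peyri\`ere's formula \eqref{eq:3} to the subset $J_A(P^\alpha)\subset J_A(\alpha)$.

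What each buys: your Lagrange argument is more conceptual --- it explains \emph{why} the optimiser should factor as $r_n t_j$ --- but it inherits Kifer's theorem as a hypothesis and requires the infinite-dimensional analytic justification you mention (existence of the maximiser, validity of the stationarity computation when some $d_n$ or $\alpha_j$ vanish). The paper's route is more self-contained: it bypasses Kifer entirely, and the key extra insight is that the product-form measure has \emph{constant} pointwise dimension across all of $J_A(\alpha)$ (because $\log\mu$ on a cylinder splits as a sum over digits plus a sum over bases), which yields the upper bound directly without any optimisation. In effect the paper re-derives Kifer's result in this case rather than using it.
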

It is direct consequence of the above Theorem that the  supremum in Kifer's result \eqref{eq:4} is attained at  the level $J_A(P^\alpha)$, where
\begin{equation}
  \label{eq:375}
  P^\alpha=(p_{n,j}^\alpha)\textrm{ and }p_{n,j}^\alpha=r_n t_j.
\end{equation}
This is yet another example of the phenomenon described by Cajar in
\cite{ca} where the Hausdorff dimension of a non-denumerable union of
sets corresponds to the supremum of the Hausdorff dimension of each
set. This property is, of course, false in general, but in the case of
sets defined in terms of the frequencies of digits holds for a large
class of systems, see \cite{ca}.

\begin{rem}
The problem considered in this note can be addressed using techniques from random dynamical systems, but with those techniques we were not able to obtain better results than the ones presented  here.  The main formulas \eqref{eq:3}, \eqref{eq:4} and \eqref{m} can be thought of as the quotient of a random entropy over a random Lyapunov exponent.
Compare with the work of Kifer  \cite{kif, kif2}.
\end{rem}

\section{Proof of Theorem \ref{fre}}

The following basic result from dimension theory will be used several times in the rest of note, for a detailed exposition on the subject see \cite{fa,pe}.  Let $\mu$ be a Borel
finite measure, the \emph{pointwise dimension} of $\mu$ at the point
$x$ is defined, whenever the limit exists, by
\begin{equation} \label{pointdim}
d_{\mu}(x) = \lim_{r \to 0} \frac{\log \mu (B(x,r))}{\log r},
\end{equation}
where $B(x,r)$ is the ball of center $x$ and radius $r$. The following result can be found in  \cite[p.42]{pe},
\begin{prop} \label{ub} Let $\mu$ be a finite Borel measure. If
  $d_{\mu}(x) \leq d$, for every $x \in F$, then $\dim_H(F) \leq d$.
\end{prop}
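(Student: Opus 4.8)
The plan is to prove this standard upper bound (a version of Billingsley's lemma, sometimes called the mass distribution principle) by a Vitali covering argument. Fix $\eps>0$; it suffices to show $\dim_H(F)\le d+\eps$, since letting $\eps\to 0$ then gives the claim. I will do this by producing a finite upper bound for the $(d+\eps)$-dimensional Hausdorff measure $\H^{d+\eps}(F)$, which immediately forces $\dim_H(F)\le d+\eps$ (the Hausdorff dimension being the threshold at which $\H^{s}$ drops from $+\infty$ to $0$).

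First I would translate the hypothesis on the pointwise dimension into a mass lower bound along small scales. Since $d_{\mu}(x)\le d<d+\eps$ for every $x\in F$, for each such $x$ there exist arbitrarily small radii $r>0$ with $\frac{\log\mu(B(x,r))}{\log r}<d+\eps$; because $\log r<0$, this is equivalent to $\mu(B(x,r))>r^{d+\eps}$. Consequently, for every $\delta>0$ the family
\[
\mathcal{B}_{\delta}=\big\{B(x,r):x\in F,\ 0<r<\delta,\ \mu(B(x,r))>r^{d+\eps}\big\}
\]
is a fine (Vitali) cover of $F$: each point of $F$ is the centre of balls from the family of arbitrarily small radius.

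Next I would apply the $5r$-covering (Vitali) lemma to $\mathcal{B}_{\delta}$ to extract a countable, pairwise disjoint subfamily $\{B(x_i,r_i)\}_{i}$ (with $r_i<\delta$) whose dilations cover $F$, that is $F\subseteq\bigcup_i B(x_i,5r_i)$. Since $\diam B(x_i,5r_i)\le 10r_i<10\delta$, these dilations form an admissible cover for computing $\H^{d+\eps}_{10\delta}$, and a term-by-term estimate gives
\[
\H^{d+\eps}_{10\delta}(F)\le\sum_i (10r_i)^{d+\eps}=10^{d+\eps}\sum_i r_i^{d+\eps}\le 10^{d+\eps}\sum_i \mu(B(x_i,r_i)).
\]
Because the balls $B(x_i,r_i)$ are pairwise disjoint and $\mu$ is finite, $\sum_i\mu(B(x_i,r_i))=\mu\big(\bigcup_i B(x_i,r_i)\big)\le\mu(\R)<\infty$. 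Thus $\H^{d+\eps}_{10\delta}(F)\le 10^{d+\eps}\mu(\R)$ uniformly in $\delta$, and letting $\delta\to 0$ yields $\H^{d+\eps}(F)\le 10^{d+\eps}\mu(\R)<\infty$. Hence $\dim_H(F)\le d+\eps$, and as $\eps>0$ was arbitrary, $\dim_H(F)\le d$.

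The only delicate point is the passage from the pointwise hypothesis — which guarantees good radii merely at arbitrarily small scales, not at every scale — to a genuine cover of $F$ with controlled overlap. This is exactly what the Vitali / $5r$-covering lemma supplies: a naive disjointification of $\mathcal{B}_\delta$ would fail to cover $F$, whereas the bounded-dilation property keeps the Hausdorff sum comparable to the disjoint mass $\sum_i\mu(B(x_i,r_i))$. The finiteness of $\mu$ is then precisely what makes the resulting bound finite, and hence useful.
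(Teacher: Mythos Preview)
Your argument is correct: this is the standard Billingsley-type upper bound, and the $5r$-covering lemma is exactly the right tool to pass from ``good radii at arbitrarily small scales'' to a cover with controlled overlap. One cosmetic point: since the paper works in $[0,1]$, it would be cleaner to write $\mu([0,1])$ (or simply the total mass of $\mu$) rather than $\mu(\R)$.

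As for comparison with the paper: there is nothing to compare against. The paper does not prove Proposition~\ref{ub}; it quotes it as a known result from Pesin's book \cite[p.~42]{pe} and uses it as a black box. Your write-up therefore supplies strictly more than the paper does on this point, giving a self-contained proof where the authors chose to cite the literature.
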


\subsection*{  Construction of the optimal sequence}
We start with a general lemma. Let $L\in \N\cup \{\infty\}$ and let
$d=(d_n)_{k=1}^L$ and $\alpha=(\alpha_j)_{j=0}^{L-1}$ be stochastic
vectors. Put $j_0:=\min\{j:\alpha_j\neq 0\}$.
\begin{lema}
\label{lem:general}
  There exist $(r_n)_{n=j_0+1}^{L}$ and $(t_{j_0})_{j=0}^{L-1}$ where $r_n,
  s_j\geq 0$ such that for 
  \begin{equation}
    \label{eq:377}
    p_{n,j}=r_n t_j
  \end{equation}
  we have that
  \begin{enumerate}
  \item\label{item:lema1} $\sum_{j=j_0}^{n-1} p_{n,j}=1$ for every
    $n\geq j_0+1$ and
  \item\label{item:lema2} $\sum_{k=j+1}^{L}d_{k}p_{k,j}=\alpha_j$ for
    every $j\geq j_0$. 
  \end{enumerate}
\end{lema}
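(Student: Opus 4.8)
The plan is to write the pair $(r_n,t_j)$ down \emph{explicitly}, derive it by a short recursion, and then verify \eqref{item:lema1} and \eqref{item:lema2} by direct computation; the sequences produced turn out to be precisely the $r_n,t_j$ displayed just before Theorem~\ref{fre}, so the lemma also explains where those formulas come from. First note that the constraints \eqref{item:lema1} and \eqref{item:lema2} on $p_{n,j}=r_nt_j$ are unchanged under $(r_n,t_j)\mapsto(\lambda r_n,\lambda^{-1}t_j)$, so I may normalize: set $t_j:=0$ for $j<j_0$ (harmless, as $\alpha_j=0$ there) and $t_{j_0}:=\alpha_{j_0}$, and write $S_n:=\sum_{j=j_0}^{n-1}t_j$. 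Then \eqref{item:lema1} is exactly $r_n=1/S_n$, and, after substituting $p_{k,j}=r_kt_j$, condition \eqref{item:lema2} reads $t_j\sum_{k>j}d_k/S_k=\alpha_j$ for every $j\ge j_0$. The whole problem thus reduces to constructing non-negative $t_{j_0},t_{j_0+1},\dots$ with those partial sums for which this last family of equations holds.

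To find the right $(t_j)$, I introduce the tails $\beta_m:=\sum_{j\ge m}\alpha_j$ and $q_m:=\sum_{k\ge m}d_k$. Since $\sum_j\alpha_j=\sum_k d_k=1$ and $d_k=0$ for $2\le k\le j_0$ (see below), the quantity $A_n=\sum_{j=j_0}^{n-1}\alpha_j-\sum_{k=j_0+1}^{n-1}d_k$ from the statement preceding Theorem~\ref{fre} equals $q_n-\beta_n$; directly, $A_{n+1}-A_n=\alpha_n-d_n$ and $A_{j_0+1}=\alpha_{j_0}$. Put $T_j:=\sum_{k>j}d_k/S_k$, so $T_{j-1}-T_j=d_j/S_j$ and \eqref{item:lema2} becomes $t_jT_j=\alpha_j$. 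Assuming such $t_j$ exist, summing $\beta_m=\sum_{j\ge m}t_jT_j$ by parts with $t_j=S_{j+1}-S_j$, and using $0\le S_{N+1}T_N\le\sum_{k>N}d_k\to0$ to kill the boundary term, one gets $S_jT_j=q_{j+1}-\beta_j=A_j-d_j$ for all $j\ge j_0$. Eliminating $T_j$ among $t_jT_j=\alpha_j$, $S_jT_j=A_j-d_j$ and $S_{j+1}=S_j+t_j$ gives the linear recursion $S_{j+1}=S_j\,A_{j+1}/(A_j-d_j)$, which with $S_{j_0+1}=\alpha_{j_0}=A_{j_0+1}$ telescopes to $S_n=A_n\prod_{k=j_0+1}^{n-1}(1-d_k/A_k)^{-1}$. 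Then $r_n=1/S_n$ and $t_j=S_{j+1}-S_j$ are the sequences in Theorem~\ref{fre}.

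Now I verify, \emph{for these explicit sequences}, that \eqref{item:lema1} and \eqref{item:lema2} hold; \eqref{item:lema1} is automatic. For \eqref{item:lema2}, put $\widetilde{T}_j:=(A_j-d_j)/S_j$ for $j\ge j_0+1$ and $\widetilde{T}_{j_0}:=1$; a one-line computation from the recursion shows $\widetilde{T}_{j-1}-\widetilde{T}_j=d_j/S_j=d_jr_j$, and $\widetilde{T}_j\to0$ as $j\to L$ (when $L<\infty$ because $A_L=d_L$; when $L=\infty$ because $0\le A_j-d_j=q_{j+1}-\beta_j\le q_{j+1}\to0$ and $S_j\ge\alpha_{j_0}>0$). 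Hence $\widetilde{T}_j=\sum_{k>j}d_kr_k=T_j$, so $t_jT_j=\bigl(\alpha_jS_j/(A_j-d_j)\bigr)\bigl((A_j-d_j)/S_j\bigr)=\alpha_j$ for $j\ge j_0+1$, while $j=j_0$ gives $t_{j_0}T_{j_0}=\alpha_{j_0}\cdot1=\alpha_{j_0}$ and $j<j_0$ gives $0=0$.

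The step I expect to be the real obstacle is showing that everything above is \emph{well defined and non-negative}, i.e. $A_n>0$ and $0\le d_n/A_n\le1$ for $n\ge j_0+1$. The bound $A_n\ge d_n\ge0$ is equivalent to $\beta_m\le q_{m+1}$ for every $m$ — exactly the admissibility condition for $J_A(\alpha)\ne\es$ (the Hall / min-cut condition for the bipartite problem where a position of base $k$ may carry only the digits $0,\dots,k-1$) — and at $m=j_0$ it says $1=\beta_{j_0}\le q_{j_0+1}$, i.e. $d_k=0$ for $2\le k\le j_0$, which holds because a base-$k$ position with $k\le j_0$ can only carry digits of frequency $0$. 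Strict positivity of $A_n$ and $A_n-d_n$ starts from $A_{j_0+1}=\alpha_{j_0}>0$ and can only fail through a ``truncation'' ($d_n=0$, $\beta_n=q_{n+1}$): there the remaining $t_j$ are forced to vanish and one simply reads off the finite solution, after which the displayed formulas apply on the non-degenerate range. So the algebra is short; it is this non-negativity bookkeeping, resting on the admissibility of $\alpha$, that carries the weight of the proof.
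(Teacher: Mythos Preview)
Your argument is correct and, at its core, coincides with the paper's: you arrive at the identical explicit formulas for $r_n$ and $t_j$ (your $S_n$ is exactly $1/r_n$, and your $A_n=q_n-\beta_n$ agrees with the paper's $A_n$), and you verify \eqref{item:lema1} and \eqref{item:lema2} by a telescoping computation. The bookkeeping is organized differently in two respects. First, the paper introduces auxiliary quantities $\alpha_j^{(n)}$ with $A_n=\sum_j\alpha_j^{(n)}$, then proves \eqref{item:lema2} by telescoping in $n$ via $\alpha_j^{(n)}=\alpha_j-\sum_{k=j+1}^{n-1}d_kp_{k,j}\to0$; you instead telescope in $j$ via $\widetilde T_j\to0$, which is the same mechanism viewed from the dual index. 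Second---and this is a genuine addition on your part---you supply a heuristic derivation of the formulas by summation by parts, and you explicitly confront the well-definedness and non-negativity of the construction through the admissibility inequality $\beta_m\le q_{m+1}$ (equivalently $A_m\ge d_m$), including the observation that $d_k=0$ for $k\le j_0$ and the handling of the degenerate ``truncation'' case. The paper's proof simply writes down the quantities and never checks that $A_n>0$ or that $p_{n,j}\ge0$, so on this point your write-up is the more complete of the two.
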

\begin{proof}
  Let $n\in \N$ be such that $j_0< n\leq L$, we define inductively the
  numbers $A_n$ and $\alpha_{j}^{(n)}$.  For every $ j \geq j_0$ let
  \begin{displaymath}
    \alpha_j^{(j_0)}:=\alpha_j \textrm{ and } A_{j_0}:=\alpha_{j_0}.
  \end{displaymath}
  If $j\geq n-1$ then we define
  $\alpha_j^{(n)}:=\alpha_j^{(n-1)}=\alpha_j$. If $j_0\leq j < n-1$ then we
  define
  \begin{equation}
    \label{eq:200}
    \alpha_j^{(n)}:=\alpha_j^{(n-1)}\Big(1-\frac{d_{n-1}}{A_{n-1}}\Big)=
    \alpha_j\prod_{k=j+1}^{n-1}\Big(1-\frac{d_{k}}{A_{k}}\Big)
    =\alpha_j\frac{\prod_{k=j_0+1}^{n-1}\Big(1-\frac{d_{k}}{A_{k}}\Big)}
    {\prod_{k=j_0+1}^{j}\Big(1-\frac{d_{k}}{A_{k}}\Big)}
  \end{equation}
  and
  \begin{equation}
    \label{eq:300}
    A_n:=\sum_{j=j_0}^{n-1}\alpha_j^{(n)}.
  \end{equation}
  Put
  \begin{displaymath}
    r_n:=\frac{1}{A_n}\prod_{k=j_0+1}^{n-1}\Big(1-\frac{d_{k}}{A_{k}}\Big)
    \textrm{ and }t_j:=\frac{\alpha_j}{\prod_{k=j_0+1}^{j}\Big(1-\frac{d_{k}}{A_{k}}\Big)}.
  \end{displaymath}
  It follows from equation \eqref{eq:377} and \eqref{eq:200} that
  \begin{equation}
    \label{eq:350}
    p_{n,j}:=\frac{\alpha_j^{(n)}}{A_n}.
  \end{equation}

  Note that from equation \eqref{eq:300} we have
  \begin{displaymath}
    \sum_{j=j_0}^{n-1} p_{n,j}=   
    \sum_{j=j_0}^{n-1} \frac{\alpha_j^{(n)}}{A_n}
    =\frac{1}{A_n}  \sum_{j=j_0}^{n-1} \alpha_j^{(n)}= \frac{A_n}{A_n}=1. 
  \end{displaymath}
  This proves item (\ref{item:lema1}).

Note that applying equation \eqref{eq:350} and \eqref{eq:200} we
obtain,
\begin{equation}
  \label{eq:400}
  \alpha_j^{(n)}=\alpha_j^{(n-1)}-d_{n-1}p_{n-1,j}
  = \alpha_j-\sum_{k=j+1}^{n-1}d_{k}p_{k,j}.
\end{equation}
Also note that if $j\leq n-1$, from equation \eqref{eq:400} and from
 item (\ref{item:lema1}) of the Lemma (that we already proved), we have that
\begin{multline*}
  A_n=\sum_{j=j_0}^{n-1} \alpha_j^{(n)} = \sum_{j=j_0}^{n-1} \alpha_j
  - \sum_{j=j_0}^{n-1}\sum_{k=j+1}^{n-1} d_k p_{k,j} 
  =\sum_{j=0}^{n-1} \alpha_j - \sum_{k=j_0+1}^{n-1}d_k\sum_{j=j_0}^{k-1} 
  p_{k,j}\\
  =\sum_{j=j_0}^{n-1} \alpha_j-\sum_{k=j_0+1}^{n-1}d_k
  = 
  \left( 1-\sum_{k=j_0+1}^{n-1}d_k \right) - 
  \left( 1-  \sum_{j=j_0}^{n-1} \alpha_j \right) 
  =  \sum_{k=n}^{L}d_k-\sum_{j=n}^{L-1} \alpha_j.
\end{multline*}
Therefore, if $L=\infty$, then 
$$ \lim_{n \to \infty} A_n = 0. $$
We have proved that if $L= \infty$ then the series $\lim_{n \to
  \infty} A_n=\sum_{j=j_0}^{\infty} \alpha_j^{(n)}= 0$. In particular,
$\lim_{n \to \infty} \alpha_j^{(n)}=0$.  Then, from equation
\eqref{eq:400} we obtain that
\begin{equation}
\sum_{k=j+1}^{\infty}d_{k}p_{k,j}=\alpha_j.
\end{equation}

On the other hand, if $L<\infty$, we obtain the following equality
\begin{displaymath}
  A_L=d_L.
\end{displaymath}
This finishes the proof.
\end{proof}

\begin{defi} \label{optimal:seq}
Let  $(p_{n,j})$ be as in Lemma \ref{lem:general} and let  $P^\alpha=(p_{n,j}^\alpha)$ be defined by
\begin{displaymath}
  p_{n,j}^\alpha=\left\{
    \begin{array}{ll}
      p_{n,j} & \textrm{ if } n\geq j_0+1, j\geq j_0\\
      0 & \textrm{ if } n\geq j_0+1, j< j_0\\
      1/n & \textrm{ if } n\leq j_0, j< j_0\\
    \end{array}
\right.
\end{displaymath}
\end{defi}
 
It is a direct consequence of Lemma \ref{lem:general} that  $P^\alpha\in\pi(\alpha)$.  
That is, $J_A(P^\alpha)\subset J_A(\alpha)$.

\subsection*{  Construction of the measure}
 Recall that, except for a countable number of points,  $x \in [0,1]$ can be written in a unique way in base $A$ as
\begin{equation*}
  x= \left[\epsilon_1(x) \epsilon_2(x) \dots \epsilon_n(x) \dots \right]_A.
\end{equation*}
Let $n\in \N$ and consider the cylinder set defined by
\begin{equation*}
  C(\epsilon_1, \dots, \epsilon_n) = \left\{ x \in [0,1] :  \epsilon_1(x)= \epsilon_1,
    \epsilon_2(x)= \epsilon_2, \dots,  \epsilon_n(x)= \epsilon_n    \right\}.
\end{equation*}
The collection of all cylinders form a semiring that generates the
Borel $\sigma-$algebra in $[0,1]$. Consider the probability measure
$\mu$ defined on cylinders by
\begin{equation}
  \label{eq:7}
  \mu  ( C(\epsilon_1, \dots, \epsilon_n)) 
  = \prod_{k=1}^{n} p_{b_k, \epsilon_k}^\alpha  = \prod_{k=1}^{n} r_{b_k}t_{\epsilon_k}.
\end{equation}
Note that,
\begin{equation*}
  \mu  \left( C(\epsilon_1(x), \dots, \epsilon_n(x)) \right) =
  \left( \prod_{j=0}^{\infty} 
    t_j ^{\tau_j(x,n)} \right) \left(  \prod_{i=2}^{\infty} r_i^{D_i(n)} \right).
\end{equation*}
Therefore, for every $x \in J_A(\alpha)$ we have that
\begin{equation}
  \label{eq:6}
  \begin{aligned}
    d_{\mu}(x)&= \lim_{r \to 0} \frac{\log \mu(B(x,r))}{\log r}=
    \lim_{n \to \infty} \frac{\log
      \mu(C(\epsilon_1(x),\ldots,\epsilon_n(x)))}{\log \prod_{i=1}^m b_i}\\
    &=
    \frac{ \lim_{n\to\infty}\frac{1}{n}\log
      \mu(C(\epsilon_1(x),\ldots,\epsilon_n(x)))}
    { \sum_{m=2}^{\infty} d_{m} \log m}\\
    &=\frac{\lim_{n\to\infty}\frac{1}{n}\log \left(
        \prod_{j=0}^{n-1} t_j^{\tau_j(n)}
        \prod_{i=2}^nr_i^{D_i(n)} \right)}{ \sum_{m=2}^{\infty}
      d_{m} \log m}
    \\
    &=\frac{\lim_{n\to\infty} \left( \sum_{j=0}^{n-1}
        \frac{\tau_j(n)}{n} \log t_j + \sum_{i=2}^n \frac{D_i(n)}{n}
        \log r_i \right)} { \sum_{m=2}^{\infty} d_{m} \log m}
    \\
    &=\frac{ \sum_{j=j_0}^{\infty} \alpha_j \log t_j +
      \sum_{i=j_0+1}^{\infty} d_{i} \log r_i} { \sum_{m=2}^{\infty}
      d_{m} \log m}.
  \end{aligned}
 \end{equation}
In virtue of Proposition \ref{ub} we obtain that
\begin{equation}
  \dim_H(J_A(\alpha)) \leq \frac{ \sum_{j=j_0}^{\infty} \alpha_j \log t_j +
    \sum_{i=j_0+1}^{\infty} d_{i} \log r_i} { \sum_{m=2}^{\infty}
    d_{m} \log m}.
\end{equation}
In order to obtain the lower bound note that since $J_A(P^\alpha)\subset J_A(\alpha)$
we have that $\dim_H(J_A(P^\alpha)) \leq \dim_H(J_A(\alpha))$.
Proceeding as in (\ref{eq:6}) and making use of  (\ref{eq:7}) we obtain that for every 
$x\in J_A(P^\alpha)$, the pointwise dimension is given by
\begin{displaymath}
  d_{\mu}(x)= \frac{\sum_{n=1}^{\infty} d_{n}
    \sum_{i=0}^{n-1} p_{i,n}^\alpha \log p_{i,n}^\alpha} {\sum_{n=2}^{\infty} d_{n}
    \log n}.
\end{displaymath}

Since $J_A(P^\alpha)\subset J_A(\alpha)$, we have that
\begin{displaymath}
  \frac{\sum_{n=1}^{\infty} d_{n}
    \sum_{i=0}^{n-1} p_{i,n}^\alpha \log p_{i,n}^\alpha} {\sum_{n=2}^{\infty} d_{n}
    \log n}=\frac{ \sum_{j=j_0}^{\infty} \alpha_j \log t_j +
    \sum_{i=j_0+1}^{\infty} d_{i} \log r_i} { \sum_{m=2}^{\infty}
    d_{m} \log m}
\end{displaymath}
and then, by \eqref{eq:3}, we obtain
\begin{displaymath}
  \dim_H(J_A(\alpha)) \geq \frac{ \sum_{j=j_0}^{\infty} \alpha_j \log t_j +
    \sum_{i=j_0+1}^{\infty} d_{i} \log r_i} { \sum_{m=2}^{\infty}
    d_{m} \log m}.
\end{displaymath}
This finishes the proof of the theorem.

 \end{document}